\def\hpq0{h^{p,q}_{\leq 0}}
\def\Hpq0{\H_{\leq 0}^{p,q}}
\def\dbar{\bar\partial}
\def\C{{\mathbb C}}
\def\D{\mathcal{D}}
\def\supp{{\rm supp\,}}
\def\H{{\mathcal H}}
\def\Re{{\rm Re\,  }}
\def\be{\begin{equation}}
\def\ee{\end{equation}}
\newtheorem{thm}{Theorem}[section]
\newtheorem{lma}[thm]{Lemma}
\newtheorem{prop}[thm]{Proposition}
\theoremstyle{definition}
\theoremstyle{remark}
\newtheorem{preremark}{Remark}
\newtheorem{preex}{Example}
\numberwithin{equation}{section}
\title[]
{A proof of the Ohsawa-Takegoshi theorem with sharp estimates}
\email{ bob@chalmers.se\\lempert@purdue.edu}
\author[]{ Bo Berndtsson and L\'aszl\'o Lempert}
\begin{document}

\begin{abstract}We show how  ideas from \cite{2Blocki} and \cite{3Blocki}  to prove the Suita conjecture can be adapted to give a proof of  the Ohsawa-Takegoshi extension theorem with sharp estimates.
\end{abstract}
\maketitle

\section{Introduction}

The Suita conjecture, see \cite{Suita},  states a sharp lower bound for the Bergman kernel of a plane domain in terms of the Robin constant. It was proved by B\l ocki  in  \cite{1Blocki}. The  proof in \cite{1Blocki} was based on a sharp version of the Ohsawa-Takegoshi extension theorem (\cite{1Ohsawa}), a point of view that had been introduced and advocated by Ohsawa, \cite{Ohsawa}. This was later  generalized by Guan and Zhou, \cite{Guan-Zhou}, who found a very general form of the Ohsawa-Takegoshi 
theorem with sharp constant, and also gave conditions for equality in the Suita problem. Later, in \cite{3Blocki}, B\l ocki gave a second, much simpler, proof of the Suita conjecture, based on variation of domains and the tensor power trick to get the optimal estimate. In connection with this, the second author  proposed yet another approach for the Suita conjecture which is sketched in \cite{2Blocki}, using plurisubharmonic variation of the Bergman kernel, from \cite{Yamaguchi},  \cite{Berndtsson}. This last proof is surpringly short. The aim of this note is to show how this method can be developed  to give a proof of rather general versions of the Ohsawa-Takegoshi theorem.  Apart from being quite simple, the method has the advantage of giving sharp estimates almost automatically and it might be useful in other contexts as well. It is also interesting to note that Guan and Zhou show that, conversely, their sharp version of the Ohsawa-Takegoshi theorem   gives a proof of the theorem on  variation of Bergman kernels mentioned above.

In  section 2 we  give the details of the proof from \cite{2Blocki}  of Suita's conjecture. In the third section we show how this idea can be adapted to prove the Ohsawa-Takegoshi extension theorem. The argument uses, instead of Bergman kernels, a theorem on positivity of direct image bundles from  \cite{2Berndtsson}. 
Throughout the paper we treat only domains in $\C^n$. Similar results hold also for Stein manifolds and for holomorphic sections of line bundles instead of holomorphic functions. The proofs here work in almost the same way in this more general setting but we have chosen to restrict to the case of domains in $\C^n$ in order to emphasize the basic ideas.  

\section{ Suita's conjecture}

Let $D$ be a (say smoothly bounded)  domain in the complex plane containing the origin. We denote by $K(z)$ the Bergman kernel for $A^2(D)$ restricted to the diagonal. Let $G(z)$ be the Green's function for $D$ with pole at 0. Then, 
$$
G(z)=\log |z|^2 -h(z)
$$
where $h$ is a harmonic function chosen so that $G$ vanishes on the boundary of $D$. Then $h(0):= c_D$ is the Robin function at 0. Suita's conjecture says that
$$
K(0)\geq \frac{e^{-c_D}}{\pi}.
$$
Here is the proof of this that is sketched  in  \cite{2Blocki}. Let for $t\leq 0$ 
$$
D_t=\{z\in D; G(z)<t\}.
$$
Let $K_t$ be the Bergman kernel for $D_t$ (on the diagonal). Since 
$$
\D:=\{(\tau,z); G(z)-\Re\tau<0\}
$$
is pseudoconvex in $\C^2$, it follows from \cite{Yamaguchi},  \cite{Berndtsson} that $\log K_t(0)$ is a convex function of $t$. 

When $t$ is very large negative, $D_t$ is a small neighbourhood of 0. On this small neighbourhood, $h(z)$ is almost constant so $|h(z)-c_D|<\epsilon$. Hence, if $\Delta_r$ is the disk with center 0 and radius $r$, 
$$
\Delta_{r_0}\subset D_t\subset \Delta_{r_1}
$$
if $r_0= e^{t/2+c_d-\epsilon}$ and $r_1= e^{t/2+c_d+\epsilon}$. By the monotonictity of Bergman kernels with respect to domains
$$
K_t(0)\sim \frac{e^{-t -c_D}}{\pi}
$$
as $t\to -\infty$. 
Hence
$$
k(t):=\log K_t(0) +t
$$
is in particular bounded from above as $t\to -\infty$. Since $k$ is convex, this implies that $k$ is increasing on the negative half axis. Therefore $k(0)\geq \lim_{t\to -\infty}k(t)$, so
$$
K_0(0)\geq \frac{e^{-c_D}}{\pi},
$$
which ends the proof. 

The same proof gives, as in \cite{1Blocki}, a precise estimate for the Bergman kernel for domains in higher dimensions. One then replaces the Green's function used above by any negative plurisubharmonic function $\psi$ in $D$ having a singularity like $\log|z|^2$ at the origin, and defines
$$
c_{D\psi}=\limsup_{z\to 0} \log|z|^2-\psi(z).
$$
The optimal choice of $\psi$ here is to make it as big as possible, i. e. to take $\psi=G_D$, the pluricomplex Green's function for $D$ with pole at 0. Finally, similar estimates hold for Bergman kernels with a plurisubharmonic weight.

\section{ The extension theorem}

We will now consider one setting of the Ohsawa-Takegoshi theorem. Let $D$ be a pseudoconvex domain in $\C^n$ and let $V$ be a submanifold of codimension $k$ in $D$. Let $\phi$ be plurisubharmonic in $D$. Denote 
$$
A^2(D)=\{f\in H(D); \int_D |f|^2 e^{-\phi}<\infty\}
$$
and 
$$
A^2(V)=\{f\in H(V); \int_V |f|^2 e^{-\phi}<\infty\}.
$$
Above we integrate against the volume element of the euclidean metric on $D$, and the volume element induced by the euclidean metric on $V$ respectively. 
We seek to find an extension operator  from $A^2(V)$ to $A^2(D)$ bounded by a good constant. Since the constants will be universal we can make some standard  reductions, so we assume that $D$ is smoothly bounded and strictly pseudoconvex, that $\phi$ is smooth and plurisubharmonic in a neighbourhood of the closure of $D$, and that $V$ extends as a submanifold across the boundary of $D$ as well.  All this can be achieved by replacing $D$ by a relatively compact subdomain.

\bigskip

Let $H(D)$ and $H(V)$ be the spaces of holomorphic functions in $D$ and $V$ respectively, and let  $I(V)$ be the subspace of $H(D)$ of functions vanishing on $V$. Since any function in $H(V)$ has an extension to a function in $H(D)$, we see that the restriction operator induces an isomorphism
$$
r:H(D)/I(V)\to H(V).
$$
Via this isomorphism we get two norms on (subspaces of) $H(V)$. The first norm is the $L^2$-norm 
$$
\|f\|^2_{A^2(V)}= \int_V |f|^2 e^{-\phi},
$$
(we will modify this norm somewhat later). The second norm is the norm induced by the isomorphism $r$ and the $L^2$-norm on $A^2(D)$
$$
\|f\|^2_0=\min_{F\in A^2(D); F=f \,\text{on}\, V\}} \int_D |F|^2 e^{-\phi}.
$$
In case the latter norm is finite, it is exactly the minimal norm of an extension of $f$ to $D$, and it is attained by the unique function that equals $f$ on $V$ and is orthogonal to $I^2(V):=I(V)\cap A^2(D)$. 

We want to estimate the second norm by the first one and the idea is to follow the same strategy as in section 2. Instead of estimating the norms directly, we  estimate the dual norms, and we will do this by considering a family of intermediate domains $D_t$ for $t\in (-\infty, 0)$. The estimate we are looking for is  easy for $t<<0$, since we can take an almost arbitrary extension if $t$ is very small, and we  then use a monotonicity property of the intermediate norms to conclude that the estimate also holds for $t=0$. There are however two technical problems in implementing this scheme: First, the two norms we are dealing with are not in general defined ( i. e. finite) on the same subspaces of $H(V)$.  The first norm is in general not dominated by  the second norm as there are functions in $A^2(D)$ whose restriction to $V$ are not square integrable on $V$. On the other hand, the second norm {\it is} dominated by a constant times the first norm, but that is a special case of what we want to prove, so it is preferable not to use this fact. 
The second problem comes from  the  substitute for the plurisubharmonic variation of Bergman kernels that we are going to use:  The positivity of direct image bundles from \cite{2Berndtsson}. The way this is stated in \cite{2Berndtsson} we cannot apply it directly to a varying family of domains, so instead we will use a varying family of weight functions.
We will therefore formulate the argument slightly differently, but it is probably good to keep the general idea in mind.

\bigskip

Let $d_V(z)$ be the distance from $z$ to $V$ and let  $G$ be a negative plurisubharmonic function in $D$ which satisfies
\be
G(z)\leq \log d_V^2(z)+ A
\ee
 and
\be
G(z)\geq\log d_V^2(z)- B(z)
\ee
as $z$ goes to $V$. Here $A$ is some constant, the value of  which will not be very important, and $B$ is a continous function in $D$. Again, by restricting to a relatively compact subdomain we may assume that $B$ is bounded. Since $A$ will not appear in the final estimates, it is actually enough to assume that (3.1) holds for some $A$ in each relatively compact subdomain of $D$.  
 $D$ is a pseudoconvex domain in $\C^n$ and we let $V$ be a submanifold of codimension $k$ in $D$.
The precise estimate we will prove is the following, cf \cite{Guan-Zhou}. 
\begin{thm} Let $f$ be a function in $A^2(V)$. Then there is a function $F$ in $A^2(D)$ whose restriction to $V$ equals $f$, which satisfies
$$
\int_D |F|^2 e^{-\phi}\leq \sigma_k\int_V |f|^2 e^{-\phi +kB},
$$
where $\sigma_k$ is the volume of the unit ball in $\C^k$. 
\end{thm}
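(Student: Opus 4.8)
The plan is to imitate the one-dimensional argument of Section~2, replacing the Bergman kernel by the minimal extension norm and the convexity of $\log K_t$ by the positivity of direct image bundles from \cite{2Berndtsson}. For $t\le 0$ set $D_t=\{G<t\}$ and
\[
\beta(t)=\inf\Big\{\int_{D_t}|F|^2e^{-\phi}:\ F\in H(D_t),\ F|_V=f\Big\}.
\]
Since $G<0$ on $D$ we have $D_0=D$, so $\beta(0)$ is exactly the squared minimal extension norm we must bound; writing $S=\sigma_k\int_V|f|^2e^{-\phi+kB}$, the theorem is the single inequality $\beta(0)\le S$. I would prove it by showing that $g(t)=-\log\beta(t)+kt$ is convex and bounded both above and below on a neighbourhood of $-\infty$, and that $\lim_{t\to-\infty}e^{-kt}\beta(t)\le S$.

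The convexity of $g$ is the heart of the matter and the step I expect to be hardest. Since $t\mapsto kt$ is linear, it amounts to the concavity of $\log\beta(t)$, i.e. to the convexity of the logarithm of the dual, Bergman-kernel-type quantity $1/\beta(t)$ --- which is why, as announced, one estimates the dual norms rather than the norms themselves. To produce this convexity I would invoke \cite{2Berndtsson}. That theorem is stated for a fixed domain with a plurisubharmonic weight depending on a parameter, rather than for the varying family $D_t$, so the family must first be re-encoded through weights: introduce a parameter $s$ with $t=\Re s$ and the weight $\phi(z)+\chi(G(z)-\Re s)$ with $\chi$ convex and increasing. Because $G(z)-\Re s$ is plurisubharmonic in $(s,z)$ and $\chi$ is convex increasing, the total weight is plurisubharmonic in $(s,z)$, and letting $\chi$ increase to the function equal to $0$ on $(-\infty,0]$ and to $+\infty$ on $(0,\infty)$ recovers the tubes $D_t$. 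The direct image metric then has positive curvature, and, the weight depending only on $\Re s$, the associated dual kernel has a logarithm that is convex in $t$; this gives the concavity of $\log\beta$. Arranging matters so that \cite{2Berndtsson} applies and controlling this singular limit in $\chi$ is the main technical obstacle.

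It remains to analyse $\beta(t)$ as $t\to-\infty$, where $D_t$ shrinks to a thin tube about $V$. In suitable local coordinates $V=\{w=0\}$ with $w\in\C^k$, (3.2) gives $D_t\subset\{|w|^2<e^{t+B+o(1)}\}$ while (3.1) gives $D_t\supset\{|w|^2<e^{t-A+o(1)}\}$. Plugging a fixed holomorphic extension of $f$ (one exists by the discussion preceding the theorem) into the definition of $\beta$ and integrating first over the fibres, whose volume is $\sigma_k$ times a power of the radius, yields the sharp upper bound $\beta(t)\le(1+o(1))\,\sigma_k e^{kt}\int_V|f|^2e^{-\phi+kB}$; this is where the constant $\sigma_k$, the factor $e^{kB}$, and the outer estimate (3.2) enter, and it gives both $g(t)\ge-\log S$ near $-\infty$ and $\limsup_{t\to-\infty}e^{-kt}\beta(t)\le S$. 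For the opposite, merely qualitative, bound I would apply the sub-mean value inequality to $|F|^2$ on each fibre, using the inner ball from (3.1): every competitor $F$ with $F|_V=f$ satisfies $\int_{\text{fibre}}|F|^2\ge\sigma_k(\text{radius})^{2k}|f|^2$, so $\beta(t)\ge c\,e^{kt}$ for some $c>0$ and hence $g$ is bounded above near $-\infty$. I emphasize that this lower bound is the elementary direction and does \emph{not} use the restriction inequality we are trying to prove, so no circularity is introduced.

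Finally I would assemble these facts. A convex function bounded both above and below near $-\infty$ has a finite limit there and is non-decreasing on $(-\infty,0]$; applied to $g$ this gives $g(0)\ge\lim_{t\to-\infty}g(t)=-\log S'$, where $S'=\lim_{t\to-\infty}e^{-kt}\beta(t)$ exists and satisfies $S'\le S$ by the previous paragraph. Therefore $\beta(0)\le S'\le S$, which is exactly the asserted estimate. The only genuinely delicate point is the convexity supplied by \cite{2Berndtsson}; the remaining steps are the thin-tube asymptotics and the elementary one-variable convexity lemma.
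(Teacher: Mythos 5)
Your overall scaffolding (a family $D_t=\{G<t\}$, an auxiliary function that is convex and bounded near $-\infty$, hence increasing, plus sharp thin-tube asymptotics contributing $\sigma_k e^{kt}\int_V|f|^2e^{-\phi+kB}$) is the right one, and your identification of which of (3.1)/(3.2) gives the inner/outer inclusion is correct. But the step you yourself single out as the heart of the matter --- the concavity of $\log\beta(t)$ --- has a genuine gap, and it is exactly the point where the paper's proof is organized differently from yours. The positivity of the direct image bundle from \cite{2Berndtsson} tells you that the \emph{dual} bundle is negatively curved, i.e.\ that for each \emph{fixed} functional $\xi$ in the dual of $A^2(D)$ the function $t\mapsto\log\|\xi\|^*_t$ is convex. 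Your $\beta(t)$, however, is by duality
$$
\beta(t)=\sup_{\xi\perp I^2(V)}\ \frac{|\langle\xi,f\rangle|^2}{\left(\|\xi\|^*_t\right)^2},
$$
a supremum of log-concave functions of $t$ taken over the (infinite-dimensional) annihilator of $I^2(V)$. A supremum of concave functions is not concave, so the concavity of $\log\beta$ does not follow from the convexity of each $\log\|\xi\|^*_t$; the phrase ``the associated dual kernel has a logarithm that is convex in $t$; this gives the concavity of $\log\beta$'' is a non sequitur. In the one-dimensional Suita argument of Section 2 this issue is invisible because the relevant annihilator is spanned by a single functional (evaluation at the origin), so the supremum is over one ray and $\log K_t$ convex really is equivalent to the log-concavity of the minimal norm. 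In codimension $k\ge 1$ with $V$ positive-dimensional it is not.

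The paper avoids this precisely by never forming $\beta(t)$ for $t<0$: it fixes one dual functional $\xi_g$ at a time (with $g\in C^\infty_c(V)$), uses \cite{2Berndtsson} to get convexity of $\log\|\xi_g\|_t$, proves that $\log\|\xi_g\|^2_t+kt$ is bounded above near $-\infty$ and hence increasing (Lemma 3.2), identifies $\lim_{t\to-\infty}\|\xi_g\|^2_te^{kt}$ with $\|\tilde g\|^2_V$ up to $\delta$ (Lemmas 3.3--3.5, which contain your thin-tube asymptotics), and only at $t=0$ assembles the resulting family of lower bounds $\|\xi_g\|_0\ge\|\tilde g\|_V$ into the bound on the minimal extension via (3.3). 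To repair your proposal you must either restructure it this way, or supply an independent argument for the concavity of $\log\beta(t)$ --- which is not a formal consequence of the direct-image theorem and is not established in this paper. The remaining ingredients of your write-up (the asymptotic upper and lower bounds for $\beta(t)$, the elementary convexity-plus-boundedness lemma, and the regularization of the varying domains by the weights $p\max(G-t,0)$) are sound and agree with the paper.
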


\bigskip

To prove the theorem, recall that we may assume that $V$ extends to a neighbourhood of $\bar D$, that $D$ is smoothly bounded and strictly pseudoconvex and that $B$ also extends to a continuous function in a neighbourhood of $\bar D$. We may then also assume   that $f$ extends to be holomorphic over the boundary, so that we know a priori that $f$ has {\it some } extension $F$ in $A^2(D)$. Then the optimal extension, that we call $F_0$, is the projection of $F$ to the orthogonal complement of $I^2(V)$. The norm of $F_0$ can be computed as 
\be
\|F_0\|_{A^2(D)}=\sup |\langle \xi,f\rangle|/\|\xi\|_{A^2(D)^*},
\ee
with the sup taken over all $\xi\neq 0$ in the dual of $A^2(D)$ that vanish on $I^2(V)$. Indeed, it is enough to take $\xi$ in some dense subspace of this space. We will take $\xi=\xi_g$ of the form
$$
\langle \xi,f\rangle=\sigma_k\int_V f\bar g e^{-\phi +kB},
$$
where $g$ is in $C^{\infty}_c(V)$. It is clear that such functionals are dense in the dual of $A^2(D)\ominus I^2(V)$, since if $\langle \xi_g,f\rangle=0$ for all such $g$, then $f$ must vanish on $V$. Put
$$
\|f\|_V^2:=\sigma_k \int_V |f|^2 e^{-\phi +kB}.
$$
By (3.3) 
$$
\|F_0\|_{A^2(D)}\leq \sup_g\|f\|_V\|\tilde g\|_V/\|\xi_g\|_{A^2(D)^*},
$$
if we denote by $\tilde g$ the ortogonal projection of $g$ to $A^2(V)$ with respect to the norm $\|\cdot\|_V$. It is  therefore enough to prove that
\be
\|\tilde g\|_V\leq \|\xi_g\|_{A^2(D)^*}.
\ee

We now let $D_t=\{z\in D; G(z)<t\}$ for $t<0$ and put $$
\psi(\tau,z):=\max(G(z)-\Re\tau, 0).
$$
Thus $\psi$  is a plurisubharmonic function in the left half plane times $D$, only depending on $(t,z)$ where $t=\Re\tau$. Let for $p>0$
$$
A^2_{t,p}:=\{h\in H(D); \|h\|^2_{t,p}:=\int_D |h|^2 e^{-(\phi +p\psi(t,\cdot))}<\infty\}.
$$
As vector spaces $A^2_{t,p}$  are all equal to $A^2(D)=A^2_{0,p}$ for any $p$.  As $p\to \infty$,  $ \|h\|^2_{t,p}$ tends to 
$$
\int_{D_t} |h|^2 e^{-\phi},
$$
so we can think of $A^2_{t,p}$ as a substitute for $A^2(D_t)$.
To simplify notation we consider $p$ fixed for the moment and write 
$$
\|\xi_g\|_t= \|\xi_g\|_{t, p}^*.
$$
We claim that $\log \|\xi_g\|_t$ is a convex function of $t$. This fact replaces the use of plurisubharmonic variation of Bergman kernels in section 2 and is the most important ingredient in our proof. Note that, if we take $g$ to be a point mass at a point $a$ instead of a function in $C^\infty_c$, then $\|\xi_g\|_t^2$ equals the Bergman kernel $K_t(a)$, so the convexity of  $\log \|\xi_g\|_t$ is indeed a generalization of the convexity of $\log K_t$ that we used earlier.  To see why it holds, note that it follows from the result on positivity of direct images in \cite{2Berndtsson}, Theorem 1.1, that the trivial vector bundle over the left half plane with fiber $A^2(D)$ equipped with the norms depending on $t$ (or $\tau$) defined above has positive curvature.  Hence the dual bundle has negative curvature, which means that $\log\|\xi\|_t$ is subharmonic, hence convex, if $\xi$ is any element of the dual space $A^2(D)^*$. (Strictly speaking this is only proved in \cite{2Berndtsson} when $\phi$ and $\psi$ are smooth, but of course the general case follows by approximation.) 
\begin{lma}
$$
\|\xi_g\|^2_t e^{kt}= O(1)
$$
as $t\to -\infty$. As a consequence 
$$
k_\xi(t):=\log \|\xi_g\|^2_t +kt
$$
is an increasing function of $t$. Hence 
$$
\|\xi_g\|_0^2\geq \lim_{t\to -\infty}
\|\xi_g\|^2_t e^{kt}.
$$
\end{lma}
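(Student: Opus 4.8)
The plan is to reduce the whole statement to the first displayed estimate, from which the monotonicity of $k_\xi$ and the final inequality follow formally. By definition
\[
\|\xi_g\|_t=\sup_{0\neq h\in A^2(D)}\frac{|\langle\xi_g,h\rangle|}{\|h\|_{t,p}},\qquad \langle\xi_g,h\rangle=\sigma_k\int_V h\bar g\, e^{-\phi+kB},
\]
so to bound $\|\xi_g\|_t$ from above it suffices to produce, for every holomorphic $h$, an inequality $|\langle\xi_g,h\rangle|\le C\,e^{-kt/2}\|h\|_{t,p}$ with $C$ independent of $t$ (for $t$ large negative). By Cauchy--Schwarz on $V$,
\[
|\langle\xi_g,h\rangle|\le \sigma_k\paren{\int_{\supp g}|h|^2e^{-\phi+kB}}^{1/2}\paren{\int_V|g|^2e^{-\phi+kB}}^{1/2},
\]
and since the second factor is a fixed constant depending only on $g$, everything comes down to estimating $\int_{\supp g}|h|^2e^{-\phi+kB}$ over $V$ by $e^{-kt}\|h\|^2_{t,p}$.

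I would establish this restriction estimate by the sub--mean value property in the directions transverse to $V$. Working in local coordinates in which $V=\{w=0\}$ with $w\in\C^k$ the transverse variable, holomorphicity of $h$ gives $|h(z',0)|^2\le(\sigma_k r^{2k})^{-1}\int_{|w|<r}|h(z',w)|^2\,dV(w)$ for any admissible radius $r$. The point is to choose $r=e^{(t-A)/2}$: by hypothesis (3.1) one has $G\le\log d_V^2+A$, so the transverse ball of radius $r$ about a point of $V$ lies inside $D_t=\{G<t\}$, where the weight $e^{-p\psi(t,\cdot)}$ equals $1$. Since $\sigma_k r^{2k}=\sigma_k e^{-kA}e^{kt}$, integrating over $\supp g$ and applying Fubini produces exactly the factor $e^{-kt}$, while the continuity of $\phi$ and the boundedness of $B$ (together with the bounded distortion between the euclidean volume on $V$ and the flat transverse slices) contribute only a constant uniform in $t$. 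Because $\psi\ge0$ forces $e^{-p\psi}\le1$ everywhere and $\psi=0$ on $D_t$, the resulting tube integral of $|h|^2e^{-\phi}$ over $D_t$ is dominated by $\|h\|^2_{t,p}$. Combining these gives $\int_{\supp g}|h|^2e^{-\phi+kB}\le C_1e^{-kt}\|h\|^2_{t,p}$, hence the first assertion $\|\xi_g\|_t^2e^{kt}=O(1)$.

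The two consequences are then soft. It was already noted that $\log\|\xi_g\|_t$ is convex in $t$ (from the negative curvature of the dual bundle), so $k_\xi(t)=\log\|\xi_g\|_t^2+kt$ is convex, being the sum of a convex and a linear function. The estimate just proved says precisely that $k_\xi$ is bounded above as $t\to-\infty$, and a convex function on $(-\infty,0)$ that is bounded above at $-\infty$ can have negative slope nowhere (otherwise convexity would force it to $+\infty$); thus $k_\xi$ is increasing. Evaluating the increasing function at $t=0$, where $\psi\equiv0$ and hence $\|\xi_g\|_0=\|\xi_g\|_{A^2(D)^*}$, gives $k_\xi(0)\ge\lim_{t\to-\infty}k_\xi(t)$, which on exponentiation is exactly $\|\xi_g\|_0^2\ge\lim_{t\to-\infty}\|\xi_g\|_t^2e^{kt}$.

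The main obstacle is the restriction estimate itself: obtaining the correct power $e^{-kt}$ with a constant uniform as $t\to-\infty$ hinges on choosing the transverse radius $r=e^{(t-A)/2}$ so that the mean--value balls genuinely sit inside $D_t$ (this is where (3.1) enters) and on controlling, uniformly in the shrinking scale $r$, both the weight $e^{-\phi+kB}$ against $e^{-\phi}$ and the geometric mismatch between the tube $\{d_V<r\}$ and the union of flat transverse balls. Since this lemma requires only the $O(1)$ bound rather than the optimal constant, a uniform $C_1$ is enough here, and the sharp $\sigma_k$ is reserved for the comparison at $t=0$.
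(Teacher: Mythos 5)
Your proposal is correct and follows essentially the same route as the paper: the sub--mean value inequality in the directions transverse to $V$, with the transverse radius chosen via condition (3.1) so the balls sit inside $D_t$ where $\psi(t,\cdot)=0$, yields $\int_{V\cap\supp g}|h|^2e^{-\phi+kB}\le Ce^{-kt}\|h\|^2_{t,p}$, and the monotonicity of $k_\xi$ then follows from convexity plus boundedness above at $-\infty$. You simply spell out the details (the choice $r=e^{(t-A)/2}$, the duality/Cauchy--Schwarz reduction) that the paper leaves implicit.
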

\begin{proof}
If $h$ is a function in $A^2_t$ for $t$ large negative the submeanvalue inequality gives that
$$
\int_{V\cap \supp (g)}|h|^2 e^{-\phi+kB}\leq C' e^{-kt}\int_{D_t} |h|^2e^{-\phi}\leq C'e^{-kt}\|h\|^2_{t,p}.
$$
Hence
$$
\|\xi_g\|^2_t\leq C e^{-kt}
$$
as we wanted. This implies that
$$
k_\xi(t)=\log \|\xi_g|^2_t +kt
$$
is bounded from above as $t$ goes to minus infinity, which together with the convexity implies that $k_\xi$  is increasing.
\end{proof}
For the converse direction we use the following lemma which implies that for a fixed function $H$, say holomorphic in a neighbourhood of $V\cap\bar D$, the norms 
$\|H\|_{A^2(D_t)}$ are asymptotically majorized by $e^{kt/2}\|h\|_{A^2(V)}$, if $h$ is the restriction of $H$ to $V$. 
\begin{lma} Let $\chi$ be a continuous function on $\bar D$. Then 
$$
\limsup_{t\to-\infty}e^{-kt}\int_{D_t} \chi\leq\sigma_k \int_V \chi e^{kB}.
$$
\end{lma}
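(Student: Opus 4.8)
The plan is to exploit that as $t\to-\infty$ the sublevel set $D_t=\{G<t\}$ collapses onto $V$, so that $\int_{D_t}\chi$ localizes to a thin tubular neighbourhood of $V$, where it can be evaluated fibrewise in normal coordinates. The only use made of the lemma is with $\chi=\abs{H}^2e^{-\phi}\ge 0$, so I would carry out the argument for nonnegative $\chi$; for such $\chi$ the single containment coming from (3.2) does all the work, and the constant $A$ of (3.1) never enters, consistent with its stated irrelevance.

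First I would record the containment. Rearranging (3.2), on $D_t$ one has $\log d_V^2(z)-B(z)\le G(z)<t$, so that
\[
D_t\subset T_t:=\setof{z\in D;\ d_V^2(z)<e^{t+B(z)}}
\]
for every $t$ so negative that (3.2) is valid throughout $D_t$, which holds eventually since $D_t\to V$. As $\chi\ge 0$ this yields $\int_{D_t}\chi\le\int_{T_t}\chi$, and the lemma reduces to showing that $\limsup_{t\to-\infty}e^{-kt}\int_{T_t}\chi\le\sigma_k\int_V\chi\,e^{kB}$.

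The remaining statement is local, so I would cover the compact set $V\cap\bar D$ by finitely many coordinate charts in which $V=\{w=0\}$, with $w=(w_1,\dots,w_k)\in\C^k$ the normal variables and $v$ tangential variables along $V$, arranged so that $d_V(z)=\abs{w}\paren{1+o(1)}$ and the Euclidean volume factors as $dV=\paren{1+o(1)}\,dV_w\,dV_v$, both uniformly as $z\to V$. The slice of $T_t$ over a point $p=(0,v)$ is then, up to a factor $1+o(1)$, the ball $\setof{\abs{w}^2<e^{t+B(p)}}$ in $\C^k$, of volume $\sigma_k e^{k(t+B(p))}$. Replacing $\chi(z)$ by $\chi(p)$ and $B(z)$ by $B(p)$ on each slice by uniform continuity on $\bar D$, Fubini together with a partition of unity gives
\[
\int_{T_t}\chi=\paren{1+o(1)}\,\sigma_k\,e^{kt}\int_V\chi\,e^{kB}\qquad(t\to-\infty),
\]
which is the required bound; since $\chi\ge 0$ one may freely enlarge $T_t$ to the full tube about $V\cap\bar D$, so that the places where $V$ meets $\partial D$ contribute nothing to the leading term.

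The step I expect to be the main obstacle is the uniform control of these approximations — that $d_V$ agrees with the normal radius $\abs{w}$ and that the volume Jacobian tends to $1$ at a rate independent of $p\in V$ — since it is this uniformity that lets the error factors collapse to $1+o(1)$ after the fibrewise rescaling $w\mapsto e^{-(t+B(p))/2}w$, which turns each slice integral into $\sigma_k e^{k(t+B(p))}\chi(p)\paren{1+o(1)}$. Compactness of $V\cap\bar D$, guaranteed by the smoothness of $V$ and its extension across $\partial D$, is exactly what makes this uniformity available.
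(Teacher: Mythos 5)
The paper gives no proof to compare against: it states this lemma and says only ``We omit the easy proof,'' so your write-up is in effect supplying the missing argument. What you propose is the standard tube computation that the authors surely had in mind, and it is correct: the containment $D_t\subset\{d_V^2<e^{t+B}\}$ from (3.2), Fermi-type coordinates in which $d_V=\abs{w}(1+o(1))$ and the volume form splits to leading order, and a fibrewise rescaling producing $\sigma_k e^{k(t+B(p))}$ per slice; restricting to $\chi\ge 0$ is harmless since the lemma is only invoked with $\chi=\abs{g'}^2e^{-\phi}$ and $\chi\equiv 1$, and the $\limsup$ formulation absorbs the boundary contribution near $V\cap\partial D$ (which has measure zero in $V$ because $\rho$ restricted to the extended submanifold is strictly plurisubharmonic). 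The one soft spot is your justification of the containment: you say (3.2) ``is valid throughout $D_t$ eventually since $D_t\to V$,'' but $D_t\to V$ is itself a consequence of (3.2) holding globally with $B$ bounded --- if (3.2) were only assumed in a neighbourhood of $V$, the function $G$ could have additional logarithmic singularities away from $V$, $D_t$ would not collapse onto $V$, and (for $k=n$) the lemma would actually fail. The paper's reduction to a relatively compact subdomain on which $B$ is bounded indicates that (3.2) is meant to hold on all of $D$, and under that reading your containment is immediate for every $t$; you should simply invoke it as a global hypothesis rather than appeal to a localization that it is needed to prove.
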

We omit the easy proof.  As a preparation for the final lemma we need a technical estimate.
\begin{lma} Let $\nu(t)$ be an increasing function for $t<0$ and assume that $\nu(t)\leq e^{kt}$. Then for $p>k$
$$
\liminf_{t\to -\infty} e^{-kt}\int_t^0 e^{-p(s-t)}d\nu(s)\leq \frac{k+1}{p-k}.
$$
\end{lma}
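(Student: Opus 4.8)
The plan is to pass from the pointwise quantity to its average over a window of length one, where a single integration by parts produces exactly the constant $k+1$. Write $M(t)=e^{-kt}\int_t^0 e^{-p(s-t)}\,d\nu(s)=e^{(p-k)t}\Phi(t)$, where $\Phi(t)=\int_t^0 e^{-ps}\,d\nu(s)$; the goal is $\liminf_{t\to-\infty}M(t)\le (k+1)/(p-k)$. Besides the hypothesis $\nu(t)\le e^{kt}$ I will also use that $\nu\ge 0$: this holds in the application, and is in fact needed, since with, say, $\nu(t)=-e^{-kt}$ (which is increasing and satisfies $\nu\le e^{kt}$) one computes $M(t)\to+\infty$. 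A first crude estimate is worth recording: integrating $\int_t^0 e^{-ps}\,d\nu$ by parts and inserting $0\le\nu(s)\le e^{ks}$ gives $M(t)\le e^{(p-k)t}\nu(0)+\tfrac{p}{p-k}\bigl(1-e^{(p-k)t}\bigr)$, so $M$ is bounded and nonnegative on $(-\infty,0]$ and $\limsup_{t\to-\infty}M\le p/(p-k)$. This bound is too weak when $p>k+1$, which is exactly why one must use the $\liminf$ and not the $\limsup$.

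The key computation is as follows. Since $d\Phi=-e^{-pt}\,d\nu$, integrating $\int_a^b M(t)\,dt=\int_a^b e^{(p-k)t}\Phi(t)\,dt$ by parts yields
\[
\int_a^b M(t)\,dt=\frac{M(b)-M(a)}{p-k}+\frac{1}{p-k}\int_a^b e^{-kt}\,d\nu(t).
\]
I then specialize to a window of length one, $a=t_1-1$, $b=t_1$, and bound the last integral. A further integration by parts gives
\[
\int_{t_1-1}^{t_1}e^{-kt}\,d\nu(t)=e^{-kt_1}\nu(t_1)-e^{-k(t_1-1)}\nu(t_1-1)+k\int_{t_1-1}^{t_1}e^{-kt}\nu(t)\,dt .
\]
Here $e^{-kt_1}\nu(t_1)\le 1$ and $e^{-kt}\nu(t)\le 1$ by $\nu\le e^{kt}$, while $e^{-k(t_1-1)}\nu(t_1-1)\ge 0$ by $\nu\ge 0$; so the three terms contribute at most $1$, $0$ and $k$, and $\int_{t_1-1}^{t_1}e^{-kt}\,d\nu\le k+1$. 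This is precisely where the constant $k+1$ enters, and where both inequalities on $\nu$ are used. Consequently
\[
\int_{t_1-1}^{t_1}M(t)\,dt\le\frac{M(t_1)-M(t_1-1)}{p-k}+\frac{k+1}{p-k}.
\]

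Finally I convert this window estimate into the $\liminf$ bound. Since $M$ is bounded and nonnegative, one cannot have $M(t_1)-M(t_1-1)\ge\varepsilon>0$ for all small $t_1$ (otherwise telescoping forces $M(t_1-n)\le M(t_1)-n\varepsilon\to-\infty$); hence there is a sequence $t_1\to-\infty$ along which $M(t_1)-M(t_1-1)$ converges to a limit that is $\le 0$. Along this sequence the right-hand side above tends to a value $\le (k+1)/(p-k)$, while, writing $\ell=\liminf_{t\to-\infty}M$ and fixing $\varepsilon>0$, the left-hand side is at least $\inf_{[t_1-1,t_1]}M\ge \ell-\varepsilon$ once $t_1$ is small enough. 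Letting the sequence run to $-\infty$ and then $\varepsilon\to 0$ gives $\ell\le (k+1)/(p-k)$, which is the assertion. The main obstacle is conceptual rather than computational: a pointwise or $\limsup$ argument only yields $p/(p-k)$, and the improvement forces one to average $M$ over a window and then discard the boundary term $M(t_1)-M(t_1-1)$ along a subsequence using the boundedness of $M$; it is this averaging, of width exactly one, that produces the claimed $k+1$. (In fact averaging over a window of width $L$ would bound the integral by $1+kL$ and give $\ell\le \tfrac{1}{(p-k)L}+\tfrac{k}{p-k}$, hence the sharp $k/(p-k)$ as $L\to\infty$; the choice $L=1$ suffices here.)
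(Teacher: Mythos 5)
Your proof is correct and rests on the same two ingredients as the paper's: reducing the $\liminf$ to an average of $M$ over an interval, and bounding $\int e^{-ks}\,d\nu$ by integration by parts using $0\le\nu(s)\le e^{ks}$ (the paper likewise silently uses $\nu\ge 0$, harmless since $\nu$ is a volume in the application). The only difference is that the paper averages over the long window $[-T,0]$, where the boundary terms are $O(1)$ against a right-hand side of order $T$, which lets it dispense with your boundedness estimate for $M$ and the telescoping subsequence argument needed to discard $M(t_1)-M(t_1-1)$ on a unit window.
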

\begin{proof} Let
$$
f(t):= e^{-kt}\int_t^0 e^{-p(s-t)}d\nu(s).
$$
It suffices to prove that
\be
\int_{-T}^0 f(t)dt\leq T \frac{k+1}{p-k}
\ee
for $T$ large. 
But
$$
\int_{-T}^0 f(t)dt=\int\int_{-T<t<s<0}e^{-ps}e^{(p-k)t}dtd\nu(s)\leq
\frac{1}{p-k}\int_{-T}^0 e^{-ks}d\nu(s).
$$
Integrating by parts we get
$$
\int_{-T}^0 e^{-ks}d\nu(s)\leq \nu(0)+k\int_{-T}^0 ds\leq T(1+k),
$$
so we are done. 
\end{proof}

\begin{lma} For any $\delta>0$
$$
\lim_{t\to -\infty} \|\xi_g\|^2_t e^{kt}\geq \|\tilde g\|^2_V-\delta,
$$
if $p$ is large enough. (Note that the limit exists by Lemma 3.2.)
\end{lma}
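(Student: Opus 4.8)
The plan is to bound $\|\xi_g\|_t$ from below by testing the functional against a single well-chosen function. Since $\|\xi_g\|_t$ is the norm dual to $\|\cdot\|_{t,p}$, every $H\in A^2(D)$ gives $\|\xi_g\|_t\ge |\langle\xi_g,H\rangle|/\|H\|_{t,p}$. Writing $\langle u,v\rangle_V=\sigma_k\int_V u\bar v\,e^{-\phi+kB}$ for the inner product behind $\|\cdot\|_V$, the definition of $\xi_g$ gives $\langle\xi_g,H\rangle=\langle r(H),g\rangle_V=\langle r(H),\tilde g\rangle_V$, the restriction $r(H)$ of $H$ to $V$ being holomorphic and $\tilde g$ being the $\|\cdot\|_V$-projection of $g$ onto $A^2(V)$. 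I would therefore choose $H\in A^2(D)$, holomorphic in a neighbourhood of $V\cap\bar D$, whose restriction $r(H)$ approximates $\tilde g$ in $\|\cdot\|_V$ to within any prescribed accuracy; such functions are dense and, by the standing reductions, extend to $A^2(D)$. Then the numerator $\langle r(H),\tilde g\rangle_V$ is as close to $\|\tilde g\|_V^2$ as we wish. The point of passing to reciprocals is that, since the limit on the left exists by Lemma 3.2,
$$
\lim_{t\to-\infty}\|\xi_g\|_t^2 e^{kt}\ge \frac{|\langle r(H),\tilde g\rangle_V|^2}{\liminf_{t\to-\infty}e^{-kt}\|H\|_{t,p}^2},
$$
using $\limsup(c/a_t)=c/\liminf a_t$; so it suffices to bound the $\liminf$ of the denominator from above.

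Next I would estimate $e^{-kt}\|H\|_{t,p}^2$. Because $\psi(t,\cdot)$ vanishes on $D_t$ and equals $G-t$ off it, the weight splits the integral as
$$
\|H\|_{t,p}^2=\int_{D_t}|H|^2 e^{-\phi}+\int_{D\setminus D_t}|H|^2 e^{-\phi-p(G-t)}.
$$
For the first term, Lemma 3.3 applied to $\chi=|H|^2e^{-\phi}$ (continuous near $V\cap\bar D$, the only region that matters since $D_t$ shrinks to $V$) gives $\limsup_{t\to-\infty}e^{-kt}\int_{D_t}|H|^2e^{-\phi}\le\|r(H)\|_V^2$. For the second term, setting $\nu(s)=\int_{D_s}|H|^2e^{-\phi}$ and pushing $|H|^2e^{-\phi}$ forward under $G$ rewrites the tail as $\int_t^0 e^{-p(s-t)}\,d\nu(s)$, exactly the integrand of Lemma 3.4. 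Since $\limsup_{s\to-\infty}e^{-ks}\nu(s)\le\|r(H)\|_V^2$ by Lemma 3.3, I would divide $\nu$ by this bound (the contribution of $s$ near $0$ is harmless, carrying a factor $e^{(p-k)t}\to 0$) and apply Lemma 3.4 to obtain $\liminf_{t\to-\infty}e^{-kt}\int_{D\setminus D_t}|H|^2e^{-\phi-p(G-t)}\le \|r(H)\|_V^2\,\frac{k+1}{p-k}$. Recombining the two pieces via $\liminf(a_t+b_t)\le\limsup a_t+\liminf b_t$ gives $\liminf_{t\to-\infty}e^{-kt}\|H\|_{t,p}^2\le\|r(H)\|_V^2\bigl(1+\frac{k+1}{p-k}\bigr)$.

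Finally I would assemble the estimate. Feeding the two bounds into the first display yields
$$
\lim_{t\to-\infty}\|\xi_g\|_t^2 e^{kt}\ge \frac{|\langle r(H),\tilde g\rangle_V|^2}{\|r(H)\|_V^2\bigl(1+\frac{k+1}{p-k}\bigr)}.
$$
As $r(H)\to\tilde g$ in $\|\cdot\|_V$, the numerator tends to $\|\tilde g\|_V^4$ and $\|r(H)\|_V^2$ to $\|\tilde g\|_V^2$, so the right-hand side tends to $\|\tilde g\|_V^2/\bigl(1+\frac{k+1}{p-k}\bigr)$; choosing the approximation close enough and then $p$ large enough makes this exceed $\|\tilde g\|_V^2-\delta$, which is the claim. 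The main obstacle is the exterior tail $\int_{D\setminus D_t}|H|^2e^{-\phi-p(G-t)}$: it is the error from replacing the hard cut-off to $D_t$ by the smooth weight $p\psi$, and controlling it is exactly what forces $p$ to be large, with Lemma 3.4 the device that quantifies it. The one genuinely delicate point is that what we need is an upper bound on the \emph{$\liminf$} of the tail rather than on its $\limsup$, and this is available precisely because the reciprocal in the first display converts the desired lower bound into an upper bound on the denominator.
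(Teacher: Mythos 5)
Your proposal is correct and follows essentially the same route as the paper: approximate $\tilde g$ by the restriction of a globally defined $H$, bound $\|\xi_g\|_t$ from below by testing against $H$, split $\|H\|_{t,p}^2$ into the integral over $D_t$ (controlled by Lemma 3.3) and the exterior tail (controlled by Lemma 3.4), and use the existence of the limit from Lemma 3.2 to get away with only a $\liminf$ bound on the denominator. The only cosmetic difference is that you take $\nu(s)=\int_{D_s}|H|^2e^{-\phi}$ directly, where the paper bounds $|H|^2e^{-\phi}$ by its maximum and lets $\nu$ be the volume of $D_s$.
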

\begin{proof}
The holomorphic function $\tilde g$ on $V$ can be approximated  in $L^2$ norm on $V$ by a function $ g'$ that extends holomorphically to a neighbourhood of $ \bar D$. For this we recall that $V$ extends to a submanifold $V'$ in a neighbourhood of $\bar D$ and approximate $\tilde g$ by a holomorphic function on $V'$ that extends across the boundary of $V'\cap D$. (See the appendix for a discussion why this is possible.) This function can then be extended to a neighbourhood of $\bar D$ by general Stein theory. Then
\be
\|\xi_g\|_t\geq |\sigma_k\int_V  g'\overline{\tilde g}e^{-\phi+kB}|/\| g'\|_{A^2_{t,p}}\geq (1-\epsilon)\|\tilde g\|_V^2/\| g'\|_{A^2_{t,p}}
\ee
if the approximation is good  enough. The proof will be concluded if we can prove for arbitrary $\epsilon'>0$ and with sufficiently large $p$  that 
\be
 \liminf_{t\to -\infty}e^{-kt/2}\| g'\|_{A^2_{t,p}}\leq \|\tilde g\|_V+\epsilon'
\ee
since we know that the limit in the lemma exists. But
$$
\| g'\|^2_{A^2_{t,p}}=\int_{D_t} |g'|^2e^{-\phi} +\int_{t<G}
|g'|^2e^{-\phi-p\psi}=: I+II.
$$
By Lemma 3.3 
$$
I=\int_{D_t} |g'|^2e^{-\phi}\leq (1+\epsilon) e^{kt}\| g'\|^2_V\leq (1+\epsilon)^2 e^{kt}\|\tilde g\|^2_V,
$$
if $t$ is sufficiently large and 
 $$
\| g'\|_V\leq(1+\epsilon)\|\tilde g\|_V.
$$ 
We will now prove that $II$ is small compared to $I$. For this we estimate $|g'|^2 e^{-\phi}$ by its maximum $M$ on $D$. 
Write $\nu(t)$ for the volume of $D_t$. By Lemma 3.3,  $\nu(t)\leq Ce^{kt} $ for a certain constant $C$. Then
$$
II\leq M \int_t^0 e^{-p(s-t)}d\nu(s).
$$
By Lemma 3.4
$$
II\leq MC\frac{1+k}{p-k} e^{kt}
$$
for a sequence of $t$ tending to $-\infty$.

All in all
$$
e^{-kt}\|g'\|^2_{A^2_{t,p}}\leq I + II\leq (1+\epsilon)^2\|\tilde g\|^2_V +\epsilon,
$$
if $p$ is large enough, for a sequence of  $t$ tending to $-\infty$. Hence we have proved (3.7) which together with (3.6) gives the lemma.

\end{proof}

By Lemmas 3.2 and 3.5 we now get
$$
\|\xi_g\|_{A^2(D)^*}\geq\lim_{t\to -\infty}\|\xi_g\|_t e^{kt/2}\geq \|\tilde g\|_V-\delta,
$$
for any $\delta>0$ . This proves (3.4) and therefore Theorem 3.1.

\subsection{ The adjoint formulation}
In this subsection we give  a variant of Theorem 3.1, corresponding to the so called 'adjoint formulation' of the Ohsawa-Takegoshi theorem.
This means that we think of $f$ and $F_0$ as holomorphic forms of maximal degree instead of functions. We are then given a holomorphic $(n-k,0)$-form $f$  on $V$, and we want to find a holomorphic $(n,0)$-form on $D$, $F_0$, such that 
$$
F_0=f\wedge dg_1\wedge...dg_k
$$
on $V$, if $g_1, ...g_k$ are holomorphic functions on $D$ whose common zero locus is $V$. We also assume $dg:= dg_1\wedge...dg_k$ does not vanish on $V$. 
\begin{thm} Let $D$ be a pseudoconvex domain in $\C^n$ and let $V$ be a holomorphic submanifold of $D$ defined by the equation $g=(g_1, ...g_k)=0$, where $g$ is holomorphic and $dg=dg_1\wedge...dg_k\neq 0$ on $V$. Let $\phi$ be plurisubharmonic in $D$. Assume $|g|\leq 1$ in $D$. Then there is a holomorphic $(n,0)$-form $F_0$ in $D$, such that
$$
F_0=f\wedge dg
$$
on $V$, and
$$
\int_D c_n F_0\wedge\bar F_0  e^{-\phi}\leq \sigma_k \int_V c_{n-k}f\wedge \bar f e^{-\phi},
$$
where $\sigma_k$ is the volume of the unit ball in $\C^k$.
\end{thm}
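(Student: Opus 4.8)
The plan is to deduce this adjoint statement from the machinery already built for Theorem 3.1, applied to the specific weight $G=\log\abs{g}^2$. Since $\abs{g}\leq 1$ this $G$ is negative and plurisubharmonic, and because $dg\neq 0$ on $V$ it has exactly a $\log d_V^2$ singularity along $V=\setof{g=0}$, so all of the constructions of this section carry over verbatim: the domains $D_t=\setof{G<t}=\setof{\abs{g}^2<e^t}$, the function $\psi(\tau,z)=\max(G(z)-\Re\tau,0)$ (which is bounded here, since $G\to-\infty$ on $V$), and the convexity of $\log\norm{\xi_g}_t$ coming from positivity of direct images. The whole point of the adjoint formulation is that it lets us replace the one-sided bounds of Lemmas 3.3 and 3.5 by the exact coarea asymptotics attached to $g$, and this is what produces the clean constant $\sigma_k$ with no extra weight.

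First I would trivialize top-degree forms by the Euclidean volume form $\omega=dz_1\wedge\cdots\wedge dz_n$, writing $F_0=\hat F\,\omega$ with $\hat F$ holomorphic; then $\int_D c_n F_0\wedge\bar F_0\,\ephi$ is just $\int_D\abs{\hat F}^2\ephi$ against Euclidean volume, so the left-hand side becomes an ordinary weighted $L^2$ norm of the \emph{function} $\hat F$, and the task is to extend the boundary value $\hat f:=\hat F|_V$. The condition $F_0=f\wedge dg$ on $V$ translates, pointwise along $V$, into an identity for $\hat f$: since the cotangent space of $V$ and the conormal space are orthogonal and $dg=dg_1\wedge\cdots\wedge dg_k$ spans the latter, one gets $\abs{\hat f}^2=\abs{f}^2\,\abs{\Lambda^k dg}^2$ on $V$, where $\abs{f}^2$ is defined by $c_{n-k}f\wedge\bar f=\abs{f}^2\,dV_V$ and $\abs{\Lambda^k dg}^2$ is the squared norm of $dg$ along the conormal of $V$, i.e.\ the Jacobian of $g$ transverse to $V$.

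The key step is the exact asymptotics of the tubes $D_t$. By the coarea formula, for a continuous $h$,
\be
\lim_{t\to-\infty}e^{-kt}\int_{D_t}h=\sigma_k\int_V\frac{h}{\abs{\Lambda^k dg}^2}\,dV_V,
\ee
because near $p\in V$ the normal slice $\setof{\zeta:\abs{dg_p(\zeta)}<e^{t/2}}$ is an ellipsoid in the $k$-dimensional normal space of volume $\sigma_k e^{kt}/\abs{\Lambda^k dg}^2$. This is the sharp replacement for Lemma 3.3, and the same computation upgrades the bound behind Lemma 3.5 to an exact limit. Feeding it into the duality argument of Theorem 3.1 — with the functionals $\xi_g$ now paired against $\hat f$, and the $V$-norm taken to be $\norm{\cdot}_V^2=\sigma_k\int_V\abs{\cdot}^2\ephi/\abs{\Lambda^k dg}^2\,dV_V$ — Lemmas 3.2 and 3.4 and the convexity of $\log\norm{\xi_g}_t$ go through unchanged and yield $\norm{\hat f}_V\leq\norm{\xi_g}_{A^2(D)^*}$, hence the extension estimate $\int_D\abs{\hat F}^2\ephi\leq\sigma_k\int_V\abs{\hat f}^2\ephi/\abs{\Lambda^k dg}^2\,dV_V$.

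Finally I would combine this with the dictionary of the second paragraph: the Jacobian factor $\abs{\Lambda^k dg}^2$ introduced by the coarea formula is exactly the factor relating $\abs{\hat f}^2$ to $\abs{f}^2$, so it cancels and the right-hand side collapses to $\sigma_k\int_V c_{n-k}f\wedge\bar f\,\ephi$, which is the asserted estimate. The main obstacle is precisely this matching — one must verify that the transverse Jacobian appearing in the limiting measure and the one appearing in $F_0=f\wedge dg$ are the \emph{same} quantity, so that the constant comes out exactly $\sigma_k$ with no spurious weight. Note in particular that a naive application of Theorem 3.1 with a continuous $B$ would be lossy, since $\log\abs{g}^2-\log d_V^2$ has a genuinely direction-dependent limit at $V$; it is the use of the exact coarea measure, rather than the one-sided bound of Lemma 3.3, that restores sharpness.
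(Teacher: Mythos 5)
Your proposal is correct and follows essentially the same route as the paper: take $G=\log|g|^2$, run the duality/convexity machinery of Theorem 3.1, and replace Lemma 3.3 by an exact asymptotic for the tubes $\{|g|^2<e^t\}$ in which the transverse Jacobian of $g$ cancels against the one implicit in $F_0=f\wedge dg$. The paper packages this cancellation as a single statement about forms ($\lim e^{-kt}\int_{\{\log|g|^2<t\}}c_k\chi'\wedge dg\wedge d\bar g=\sigma_k\int_V\chi'$, proved in coordinates with $(z_1,\dots,z_k)=(g_1,\dots,g_k)$), whereas you carry out the same computation with the coarea formula and explicit Jacobian densities; the content is identical.
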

Here $c_n=(i)^{n^2}$ is a unimodular constant chosen so that $c_n F_0\wedge\bar F_0$ is positive. 
This follows in the same way as Theorem 3.1 if we let $G=\log |g|^2$. The only difference in the proof is that we replace Lemma 3.2 but the following statement. 
\begin{lma} Let $\chi'$ be a continuous $2(n-k)$-form on $\bar D$ and let  
$$
\chi=c_k \chi'\wedge dg\wedge d\bar g.  
$$ 
 Then
$$
\lim_{t\to-\infty}e^{kt}\int_{\log|g|^2<t}\chi=\sigma_k \int_V\chi'.
$$
\end{lma}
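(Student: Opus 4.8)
The plan is to prove the statement by a Fubini, or integration-over-fibres, computation localised near $V$, using that the region $D_t:=\{\log|g|^2<t\}=\{|g|<e^{t/2}\}$ shrinks down to $V$ as $t\to-\infty$. Since $dg\neq 0$ along $V$ and $\bar D$ is compact, the map $g=(g_1,\dots,g_k)$ is a holomorphic submersion onto a neighbourhood of $0\in\C^k$ on some neighbourhood $U$ of $V$ in $D$, with $g^{-1}(0)=V$; and by continuity of $g$ together with compactness of $\bar D$, for all $t$ sufficiently negative one has $D_t\subset U$. For such $t$ the fibres $g^{-1}(w)$, $|w|<e^{t/2}$, are smooth $(n-k)$-dimensional submanifolds that foliate $D_t$ and degenerate to $V$ as $w\to 0$.

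The next step is to use that $dg\wedge d\bar g=g^*(dw\wedge d\bar w)$ is the pullback of the Euclidean volume form on the base, so that $c_k\,dg\wedge d\bar g$ represents volume in the $g$-directions and the integral of $\chi=c_k\chi'\wedge dg\wedge d\bar g$ factors through integration over the fibres of $g$:
$$\int_{D_t}\chi=\int_{\{|w|<e^{t/2}\}}\Phi(w)\,c_k\,dw\wedge d\bar w,\qquad \Phi(w):=\int_{g^{-1}(w)}\chi'.$$
Here $\Phi$ is the fibre integral of the continuous $2(n-k)$-form $\chi'$. Granting that $\Phi$ extends continuously to $w=0$ with $\Phi(0)=\int_V\chi'$, the mean value of $\Phi$ over $\{|w|<r\}$ tends to $\Phi(0)$ as $r\to0$; since $\int_{\{|w|<r\}}c_k\,dw\wedge d\bar w=\sigma_k r^{2k}$, taking $r=e^{t/2}$ yields
$$\int_{D_t}\chi=\sigma_k e^{kt}\Bigl(\int_V\chi'+o(1)\Bigr)\qquad(t\to-\infty),$$
the normalising factor being the volume $\sigma_k e^{kt}$ of the transverse ball of radius $e^{t/2}$. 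This is the asserted limit.

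The step I expect to be the main obstacle is the continuity of the fibre integral $\Phi$ up to $w=0$, where the fibre $g^{-1}(w)$ degenerates onto $V$. I would handle it by straightening $g$: near each point of $V$ the submersion theorem provides holomorphic coordinates $(v,w)$ in which $w=g$, so that the fibres become the coordinate slices $\{w=\mathrm{const}\}$ and $\chi'$ pulls back to a continuous form whose slice integrals depend continuously on $w$ by the uniform continuity of $\chi'$ on $\bar D$; a partition of unity subordinate to such charts then patches these local computations into the global statement $\Phi(w)\to\int_V\chi'$. One must also verify that the part of $D_t$ near $\partial D$ makes no contribution in the limit, which is where the standing assumption that $V$ — and with it the foliation by the fibres of $g$ — extends across the boundary of $D$ enters.
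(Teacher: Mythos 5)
Your argument is correct and is precisely the proof the paper intends: the paper dispatches this lemma with the one-line remark that it follows ``using a partition of unity and choosing local coordinates $(z_1,\dots,z_n)$ such that $(z_1,\dots,z_k)=(g_1,\dots,g_k)$,'' which is exactly your straightening of the submersion $g$ followed by the fibre-integration (Fubini) computation. The only point worth flagging is that the normalising factor in the printed statement should be $e^{-kt}$ rather than $e^{kt}$ (consistent with Lemma 3.3), as your own computation $\int_{D_t}\chi=\sigma_k e^{kt}\bigl(\int_V\chi'+o(1)\bigr)$ shows; you have tacitly read the statement with the corrected sign.
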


This is easily proved using a partition of unity and choosing local coordinates $(z_1, ...z_n)$ such that $(z_1, ...z_k)=(g_1, ...g_k)$.
\subsection{ A more general version}

In the version of the extension theorem we have considered so far, the submanifold $V$ was defined by a negative plurisubharmonic function $G$. In this section we will discuss a more general case when $G$ satisfies a bound $G<\psi$, where $\psi$ is another plurisubharmonic function. 
This more liberal growth condition means intuitively that the variety $V$ is allowed to be bigger. One is then still able to extend, but the extended function  needs to be larger as well.

Such situations also occur naturally  when $V$ is defined as the zero locus of a holomorphic section, $s$,  of a vector bundle $E$ over $V$, cf. \cite{Manivel}. Indeed, if we assume  the norm of $s$ with respect to a hermitian metric $h$ on $E$ is bounded by 1, and that the curvature of $h$, satisfies a bound
$$
\Theta^h\leq dd^c\psi\otimes I,
$$
then $G:=\log |s|^2_h +\psi$ will be a plurisubharmonic function satisfying $G<\psi$. This follows since
$$
dd^c \log |s|^2_h\geq -\frac{\langle \Theta^h s, s\rangle_h}{|s|_h^2}.
$$

\begin{thm} Let $G$ be a plurisubharmonic function in $D$ satisfying conditions (3.1) and (3.2), and assume $G<\psi$, where $\psi$ is plurisubharmonic in $D$.  Let $f$ be a function in $A^2(V)$. Assume that $\phi$ is plurisubharmonic in $D$ and satisfies $dd^c\phi\geq \delta dd^c\psi$, where $\delta>0$. Then there is a function $F$ in $A^2(D)$ whose restriction to $V$ equals $f$ and satisfies the estimate
$$
\int_D |F|^2 e^{-\phi -k\psi}\leq (k/\delta +1)\sigma_k\int_V |f|^2 e^{-\phi +kB}.
$$
\end{thm}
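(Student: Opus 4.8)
The plan is to follow the proof of Theorem 3.1 almost verbatim, with the weight $\phi$ replaced by $\phi + k\psi$ in the target space and the constant $\sigma_k$ replaced by $(k/\delta + 1)\sigma_k$. First I would set up the same duality framework: the optimal extension $F_0$ has norm computed via (3.3) as a supremum of $|\langle \xi_g, f\rangle|/\|\xi_g\|_{A^2(D)^*}$ over functionals $\xi_g$ given by integration of $f\bar g e^{-\phi + kB}$ against test functions $g \in C^\infty_c(V)$, and it again suffices to prove $\|\tilde g\|_V \leq \|\xi_g\|_{A^2(D)^*}$, where now the $A^2(D)$ norm carries the weight $e^{-\phi - k\psi}$. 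The key modification is in the definition of the intermediate norms: I would set $\psi(\tau,z) := \max(G(z) - \Re\tau, 0)$ as before, but now build the family with weight $e^{-(\phi + k\psi + p\,\tilde\psi(t,\cdot))}$, where $\tilde\psi$ is the truncation at level $t$. The crucial point is that $\phi + k\psi$ must still be plurisubharmonic for the positivity-of-direct-images theorem from \cite{2Berndtsson} to apply, which holds since $\phi$ and $\psi$ are both plurisubharmonic; hence $\log\|\xi_g\|_t$ remains convex in $t$ and Lemma 3.2 carries over unchanged.

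The substantive difference appears in the lower-bound direction, the analogue of Lemma 3.5. Here the extra weight $e^{-k\psi}$ interacts with the growth condition $G < \psi$, and this is where the constant $(k/\delta + 1)$ is produced rather than $1$. I would approximate $\tilde g$ by a holomorphic $g'$ extending across the boundary, as in Lemma 3.5, and then split $\|g'\|^2_{A^2_{t,p}}$ into the piece over $D_t$ and the piece over $\{t < G\}$. The term over $D_t$ still behaves like $e^{kt}\|g'\|_V^2$ by the analogue of Lemma 3.3 (now with the weight including $e^{-k\psi}$; on $D_t = \{G < t\}$ and as $z \to V$ we have $\psi \to 0$ along with $G$, so the asymptotics are controlled). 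The tail term is where I expect the main obstacle: the condition $dd^c\phi \geq \delta\, dd^c\psi$ must be used to show that $e^{-\phi - k\psi}$ decays fast enough in the region $\{G < \psi, G > t\}$ so that the tail contributes at most the factor $k/\delta$ relative to the main term.

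Concretely, I anticipate that the hardest step is replacing the crude bound $|g'|^2 e^{-\phi} \leq M$ used in Lemma 3.5, which worked because $\phi$ was merely plurisubharmonic and the tail decay came entirely from the artificial weight $p\psi$. In the present setting one must instead exploit the genuine convexity provided by $dd^c\phi \geq \delta\, dd^c\psi$ to control $\int_{\{t < G\}} |g'|^2 e^{-\phi - k\psi - p\tilde\psi}$, and the quantitative version of Lemma 3.4 will have to be adjusted so that the combination of the genuine weight and the penalty weight yields the sharp coefficient $(k/\delta + 1)$ in the limit $p \to \infty$. I would aim to show, for a sequence $t \to -\infty$, an estimate of the form
$$
e^{-kt}\|g'\|^2_{A^2_{t,p}} \leq (1 + \epsilon)\paren{k/\delta + 1}\|\tilde g\|_V^2,
$$
after which combining with the analogue of (3.6) and Lemma 3.2 gives (3.4) with the claimed constant, completing the proof exactly as Theorem 3.1 was completed from Lemmas 3.2 and 3.5.
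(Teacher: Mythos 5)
Your proposal diverges fundamentally from the paper's proof, and the route you choose has genuine gaps. The paper does \emph{not} rerun the duality/convexity machinery of Theorem 3.1 with a modified weight; it \emph{reduces} Theorem 3.7 to Theorem 3.1 by adding one variable. One forms the Hartogs-type pseudoconvex domain $\tilde D=\{(z_0,z)\in\C\times D:\ |z_0|^2<e^{-\psi(z)}\}$, sets $\tilde V=p^{-1}(V)$, $\tilde G=G+\log(|z_0|^2+\epsilon)-C$ (which is \emph{negative} and plurisubharmonic on $\tilde D$ precisely because $G<\psi$), and $\tilde f=z_0^kf$. Applying Theorem 3.1 on $\tilde D$ with the weight $\phi+(1-\delta)\log|z_0|^2$, averaging in $\arg z_0$ to force $\tilde F=z_0^kF(z)$, and integrating out the fiber produces the factor $k/\delta+1$ as the exact ratio of the fiber integrals $\int_{|z_0|^2<e^{-\psi}}|z_0|^{2\delta-2}=\pi e^{-\delta\psi}/\delta$ and $\int_{|z_0|^2<e^{-\psi}}|z_0|^{2k+2\delta-2}=\pi e^{-(k+\delta)\psi}/(k+\delta)$. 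The hypothesis $dd^c\phi\geq\delta\, dd^c\psi$ enters only to guarantee that $\phi-\delta\psi$ is plurisubharmonic, so that Theorem 3.1 may be applied with that function in place of $\phi$.

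Your direct adaptation fails at two concrete points. First, in Theorem 3.7 the function $G$ is no longer assumed negative, only $G<\psi$; hence $\{G<0\}$ need not be all of $D$, the weight $e^{-p\max(G,0)}$ at $t=0$ is not the weight of $A^2(D)$, and the comparison between $\|\xi_g\|_{0,p}^*$ and $\|\xi_g\|_{A^2(D)^*}$ goes in the wrong direction, so the monotonicity argument no longer terminates at the norm you need. (Restoring negativity is exactly what the extra variable $z_0$ accomplishes.) Second, the mechanism you propose for extracting the constant does not exist: the inequality $dd^c\phi\geq\delta\, dd^c\psi$ is a pointwise bound on complex Hessians and carries no pointwise information whatsoever about the size of $e^{-\phi-k\psi}$ on $\{t<G\}$, so it cannot supply the ``decay'' you invoke to control the tail term $II$. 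You yourself flag this as the step you have not carried out; it is precisely the content of the theorem, and in the paper it is obtained by the fiber-integration computation above rather than by any refinement of Lemmas 3.3--3.5.
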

We will prove Theorem 3.7 by reducing it to Theorem 3.1. 
Let
$$
\tilde D:=\{ (z_0,z)\in \C\times D; |z_0|^2<e^{-\psi}\}.
$$
Then $\tilde D$ is pseudoconvex in $\C^{n+1}$ and there is a natural projection map $p$ from $\tilde D$ to $D$. Let $\tilde V= p^{-1}(V)$. Assume for the moment that $\psi$ is bounded by a constant $a$. Take $\epsilon>0$ and  let
$C=C(\epsilon,\psi)=\log(1+\epsilon e^a)$. Note that $C(\epsilon, \psi)\to 0$ as $\epsilon\to 0$. Let
$$
\tilde G(z_0,z):= G(z)+\log(|z_0|^2 +\epsilon)- C.
$$
Then $\tilde G$ is negative and plurisubharmonic in $\tilde D$.
Let $\tilde f(z_0,z)= z_0^k f(z)$. Take $0<\delta\leq 1$. By Theorem 3.1, with $B$ replaced by $B-\log(|z_0|^2 +\epsilon)+C$ and $\phi$ replaced by $\phi +(1-\delta)\log |z_0|^2$, there is a function $\tilde F$ in $A^2(\tilde D)$ 
that extends $\tilde f$ and satisfies
$$
\int_{\tilde D} \frac{|\tilde F|^2}{|z_0|^{2-2\delta}} e^{-\phi}\leq\sigma_k\int_{\tilde V} 
\frac{|\tilde f|^2}{|z_0|^{2-2\delta}(|z_0|^2+\epsilon)^{k}} e^{-\phi+kB}(1+\epsilon e^a)^k.
$$
A standard limiting argument shows that an extension satsifying this estimate can be found also for $\epsilon =0$ and without the assumption that $\psi$ be bounded. 
Then, replacing $\tilde F$ by 
$$
\int_0^{2\pi} \tilde F(e^{i\theta}\cdot)e^{-ki\theta} d\theta/2\pi,
$$
we see that we may assume that $\tilde F= z_0^k F(z)$. Theorem 3.7 then follows (for $\delta\leq 1$) if we carry out the integration with respect to $z_0$ and replace the arbitrary plurisubharmonic function $\phi$ by $\phi+\delta\psi$. For $\delta>1$ we write $\delta= m+\delta'$, where $m$ is an integer and $\delta'<1$, and run the same argument with $\tilde f:= z_0^{k+m} f$.

\section{Appendix}
In this appendix we state and prove the approximation result that was used in the proof of Lemma 3.4.  It could be derived from a theorem of Kerzman  (\cite{Kerzman}, Theorem 1.4.1), which deals with $L^p$-approximation,  but we include a quick proof along different lines.
\begin{prop} Let $X$ be a Stein manifold and let 
$$
\Omega=\{z\in X; \rho(z)<0\}
$$
be a relatively compact subdomain of $X$, defined by a smooth strictly plurisubharmonic function in $X$ with $d\rho\neq 0$ on $\partial\Omega$. Let $dV$ be a smooth volume form on $X$, and let $f$ be a holomorphic function on $\Omega$ such that
$$
\int_\Omega |f|^2 dV<\infty.
$$
Then there is a sequence of  functions $f_j$, holomorphic on all of $X$, such 
that
$$
\lim_{t\to \infty}\int_\Omega |f-f_j|^2 dV=0
$$
\end{prop}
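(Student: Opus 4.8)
The plan is to imitate the classical fact that on the disc the dilations $f(rz)$, $r\uparrow 1$, approximate $f\in A^2$ in $L^2$-norm, replacing the dilations by a suitable family of holomorphic self-maps of a neighbourhood of $\bar\Omega$. Two routine reductions frame the problem. First, since $\Omega$ is relatively compact and $dV$ is smooth, $dV(\Omega)<\infty$, so uniform convergence on $\bar\Omega$ already implies convergence in $L^2(\Omega)$. Second, because $\rho$ is plurisubharmonic the holomorphic hull of the compact set $\bar\Omega=\setof{\rho\le 0}$ is contained in $\setof{\rho\le 0}$, so $\bar\Omega$ is $\O(X)$-convex; hence, by the Oka--Weil theorem on the Stein manifold $X$, every function holomorphic in a neighbourhood of $\bar\Omega$ is a uniform limit on $\bar\Omega$ of functions in $\O(X)$. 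These two remarks reduce the proposition to a single statement: $f$ can be approximated in $L^2(\Omega)$ by functions holomorphic in a neighbourhood of $\bar\Omega$.

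For this central step I would construct a one-parameter family of holomorphic maps $\Psi_s$, defined on a fixed neighbourhood of $\bar\Omega$ for small $s>0$, with $\Psi_0=\mathrm{id}$ and $\Psi_s(\bar\Omega)\subset\Omega$ --- the abstract substitute for the contraction $z\mapsto rz$. The natural device is the flow of a holomorphic $(1,0)$-vector field $W$: the real flow generated by the coefficients of $W$ is by holomorphic maps, and along $\partial\Omega$ one has $\frac{d}{ds}\rho(\Psi_s(z))\big|_{s=0}=2\Re(W\rho)$. Thus if $W$ satisfies $\Re(W\rho)<0$ on $\partial\Omega$, then for small $s>0$ the map $\Psi_s$ pushes a whole neighbourhood $\setof{\rho<\eta(s)}$ of $\bar\Omega$ into $\Omega$, so that $f\circ\Psi_s$ is holomorphic there. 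It remains to check that $f\circ\Psi_s\to f$ in $L^2(\Omega)$ as $s\to 0$: since $\Psi_s\to\mathrm{id}$ smoothly with $\Psi_s(\Omega)\subset\subset\Omega$ and uniformly bounded Jacobians, a change of variables bounds $\int_\Omega\abs{f\circ\Psi_s-h\circ\Psi_s}^2\,dV$ by a constant times $\norm{f-h}_{L^2(\Omega)}^2$, while $h\circ\Psi_s\to h$ uniformly for continuous $h$; a three-$\epsilon$ argument then gives the claim. Crucially, this argument genuinely tracks the boundary growth of $f$, exactly as dilation does on the disc, which is why a naive cut-off of $f$ corrected by a weighted $\dbar$-solution cannot be made to work: the source produced by cutting off is interior, and no plurisubharmonic weight can suppress it where it is needed.

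The main obstacle is the construction of a holomorphic field $W$ with $\Re(W\rho)<0$ on the whole of $\partial\Omega$. Pointwise this is possible, since $\partial\rho\neq 0$ on $\partial\Omega$ and, by Cartan's Theorem A, $T^{1,0}X$ is globally generated; the difficulty is to realise the sign condition \emph{simultaneously} at every boundary point by a single holomorphic field. The obvious inward (complex-gradient) field and any partition-of-unity patching of local inward fields are only smooth, and correcting them to be holomorphic by a $\dbar$-solution need not preserve the pointwise sign. This is the one place where the strict pseudoconvexity of $\Omega$ (strict plurisubharmonicity of $\rho$) ought to enter, and it is where I would concentrate the effort; the $L^2$-continuity of composition and the Oka--Weil reduction are then routine. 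In the application in Lemma 3.4, where $X$ sits as a submanifold of $\Cn$ and $\rho$ is the defining function of a strictly pseudoconvex $D$, one may alternatively attempt to produce $\Psi_s$ from an ambient contraction of $\Cn$ toward an interior point, composed with a holomorphic retraction of a neighbourhood onto $X$.
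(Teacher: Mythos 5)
Your two reductions (uniform convergence on $\bar\Omega$ implies $L^2$ convergence; Oka--Weil on the $\O(X)$-convex set $\bar\Omega$) are fine, but the central step you leave open --- a holomorphic vector field $W$ with $\Re (W\rho)<0$ on all of $\partial\Omega$, or holomorphic maps $\Psi_s$ near the identity with $\Psi_s(\bar\Omega)\subset\Omega$ --- is not just difficult: it is impossible in general, so the plan cannot be completed. Take $X=\C\setminus\{0\}$ and $\rho(z)=(\log|z|)^2-(\log 2)^2$, which is strictly subharmonic on $X$, so that $\Omega$ is the annulus $\{1/2<|z|<2\}$. Writing $W=w(z)\,\partial/\partial z$ one gets $W\rho=w(z)\log|z|/z$, so $\Re(W\rho)<0$ on $\partial\Omega$ forces $\Re(w(z)/z)<0$ on $|z|=2$ and $\Re(w(z)/z)>0$ on $|z|=1/2$; but the mean of the holomorphic function $w(z)/z$ over a circle $|z|=r$ is its zeroth Laurent coefficient, the same for both circles, a contradiction. (Likewise no holomorphic map homotopic to the identity sends the closed annulus into the open one, since the modulus of an annulus increases under inclusion and cannot increase under a degree-one holomorphic map.) Yet the Proposition certainly holds for this $\Omega$ --- Laurent polynomials are dense in its Bergman space --- so it is the method, not the statement, that fails. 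Strict pseudoconvexity does not rescue the construction here: the annulus is strictly pseudoconvex in the required sense.

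Your stated reason for rejecting the cut-off-plus-$\dbar$ route is also mistaken, and that route is exactly the paper's proof. With cut-offs $\chi_j$ increasing to $1$ one has $\chi_j f\to f$ in $L^2(\Omega)$, and since the Bergman projection $P$ is an orthogonal (hence norm-one) projection, $f_j:=P(\chi_j f)=\chi_j f - v_j\to Pf=f$ in $L^2(\Omega)$, where $v_j$ is the $L^2$-minimal solution of $\dbar v=f\dbar\chi_j$. No weight is needed to control the interior source; minimality of the solution already gives convergence, and strict pseudoconvexity enters only through the regularity of the $\dbar$-Neumann problem, which makes the $f_j$ smooth up to $\partial\Omega$. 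The second step then extends such an $f_j$ smoothly to a compactly supported $F$ on $X$ and solves $\dbar u_m=\dbar F$ with weights $e^{-m\max(\rho,0)}$; since $\dbar F$ is supported in $\{\rho>0\}$, the weighted data tend to $0$ and $F-u_m$ are global holomorphic functions converging to $f_j$ in $L^2(\Omega)$. This weighted step also replaces your Oka--Weil reduction, and, unlike a holomorphic contraction, it exists on every Stein manifold.
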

\begin{proof} We first approximate $f$ by  holomorphic functions that are smooth up to the boundary of $\Omega$. For this we take a sequence of cut-off function $\chi_j$, compactly supported in $\Omega$ that increase to 1 in $\Omega$. Then $\chi_j f$ tend to $f$ in $L^2(\Omega)$, so if we denote by $P$ the Bergman projection operator for $L^2(\Omega, dV)$, $ f_j:=P(\chi_j f)$ also tend to $f$ in $L^2$. These functions are of course holomorphic, and it follows from the regularity of the $\dbar$-Neumann problem that they are smooth up to the boundary, since
$$
f_j=\chi_j f-v_j,
$$
where $v_j$ is the $L^2$-minimal solution of $\dbar v= f\dbar\chi_j$.

We may thus assume from the start that $f$ is smooth up to the boundary. We then only need to extend $f$ smoothly to a function $F$ with compact support in $X$. Solve $\dbar u_j=\dbar F$ with $L^2$-estimates for the weights
$e^{-j \max(\rho, 0)}$. Then $F- u_j$ are holomorphic on $X$ and tend to $f$ in $L^2(\Omega, dV)$, since
$$
\int_X |\dbar F|^2 e^{-j \max(\rho, 0)}dV=\int_{\rho>0} |\dbar F|^2 e^{-j\rho}dV
$$
tends to zero.
\end{proof}

\def\listing#1#2#3{{\sc #1}:\ {\it #2}, \ #3.}

\end{document}